\newtheorem{thm}{Theorem}[section]
\newtheorem{lemma}[thm]{Lemma}
\newtheorem{claim}[thm]{Claim}
\def\mathbf{\boldsymbol}
\begin{document}

\title{A simple proof of the Zeilberger--Bressoud $q$-Dyson theorem}
\author[Gy. K\'arolyi]{Gyula K\'arolyi}

\address{School of Mathematics and Physics,
The University of Queensland, Brisbane, QLD 4072, Australia}
\email{karolyi@cs.elte.hu}
\thanks{This research was supported by the Australian Research Council,
by ERC Advanced Research Grant No. 267165, and by
Hungarian National Scientific
Research Funds (OTKA) Grants 67676 and 81310.}

\author[Z. L. Nagy]{Zolt\'an L\'or\'ant Nagy}
\address{Alfr\'ed R\'enyi Institute of Mathematics, 
Re\'altanoda utca 13--15, Budapest, 1053 Hungary}
\email{nagyzoltanlorant@gmail.com}

\subjclass[2000]{05A19, 05A30, 33D05, 33D60}



\keywords{constant term identities, Laurent polynomials, Dyson's conjecture,
Combinatorial Nullstellensatz}


\begin{abstract}
\noindent
As an application of the Combinatorial Nullstellensatz, we
give a short polynomial proof of the $q$-analogue of Dyson's conjecture
formulated by Andrews and first proved by Zeilberger and Bressoud.
\end{abstract}

\maketitle

\section{Introduction}

Let $x_1,\ldots,x_n$ denote independent variables, each associated with
a nonnegative integer $a_i$. Motivated by a problem in statistical physics
Dyson \cite{Dyson} in 1962 formulated the hypothesis that the constant term of 
the Laurent polynomial
$$ \prod_{1\leq i \neq j \leq n}\left(1-\frac{x_i}{x_j}\right)^{a_i}$$ 
is equal to the multinomial 
coefficient $(a_1+a_2+\dots +a_n)!/(a_1!a_2! \dots a_n!)$.
Independently Gunson [unpublished] 
and Wilson \cite{Wilson} confirmed the statement in the same year, 
then Good gave an elegant proof \cite{Good} using Lagrange interpolation.
\bigskip

Let $q$ denote yet another independent variable. In 1975
Andrews \cite{Andrews} suggested the following $q$-analogue 
of Dyson's conjecture: The constant term 
of the Laurent polynomial
$$f_q(\mathbf{x}):=
f_q(x_1, x_2, \dots, x_n)=\prod_{1\leq i < j \leq n} 
\left(\frac{x_i}{x_j}\right)_{a_i}\left(\frac{qx_j}{x_i}\right)_{a_j}
\in \mathbb{Q}(q)[\mathbf{x},\mathbf{x}^{-1}]$$  
must be  
$$\frac{\left(q\right)_{a_1+a_2+\dots +a_n}}
{\left(q\right)_{a_1}\left(q\right)_{a_2}\dots\left(q\right)_{a_n}},$$
where $\big(t\big)_{k}= (1-t)(1-tq)\dots(1-tq^{k-1})$ with 
$\big(t\big)_{0}$ defined to be $1$.  
Specializing at $q=1$, Andrews' conjecture gives back that of Dyson.

Despite several attempts \cite{Kadell,Stanley,Stanley2} the problem 
remained unsolved until 1985, when Zeilberger and Bressoud \cite{Zeilberger2} 
found a 
combinatorial proof. Shorter proofs for the equal parameter case 
$a_1=a_2=\ldots=a_n$ are due to Habsieger \cite{Habsieger}, Kadell
\cite{Kadell2} and Stembridge \cite{Stembridge}; they cover
the special case $A_{n-1}$ of a problem of Macdonald \cite{Macdonald}
concerning root systems, which was solved in full generality by Cherednik
\cite{Cherednik}.
A shorter proof of the Zeilberger--Bressoud theorem,
manipulating formal Laurent series,
was given by Gessel and Xin \cite{Gessel}.
\bigskip

Following up a recent idea of Karasev and Petrov 
we present a very short combinatorial proof using polynomial techniques. 
We find that their proof of the Dyson conjecture in \cite{Karasev}
naturally extends for Andrews' $q$-Dyson conjecture.
We note that built on the same basic principles but with more
sophisticated details it is possible to prove a whole family of constant
term identities for Laurent polynomials, including 
the Bressoud--Goulden theorems \cite{Bressoud},
conjectures of Kadell \cite{Kadell3,Kadell4},
the $q$-Morris constant
term identity \cite{Habsieger,Kadell2,Morris,Zeilberger} and its far
reaching generalizations conjectured by Forrester \cite{Baker,Forrester};
see \cite{Karolyi,Karolyi2,Karolyi3}. 
We decided to publish this proof separately because of its sheer simplicity.

\section{The proof}

Note that if $a_i=0$, then we may omit all factors that include the
variable $x_i$ without affecting the constant term of $f_q$.
Accordingly, we may assume that each $a_i$ is a positive integer.
Consider the homogeneous polynomial 
$$F(x_1, x_2, \dots, x_n)= 
\prod_{1\leq i < j \leq n}
{\left( 
\prod_{t=0}^{a_i-1}{(x_j-x_iq^t)}\cdot
\prod_{t=1}^{a_j}{(x_i-x_jq^t)}
\right)}\in \mathbb{Q}(q)[\mathbf{x}].$$ 
Clearly, the constant term of $f_q(\mathbf{x})$ is equal to the coefficient 
of $\prod_i{x_i^{\sigma-a_i}}$ in $F(\mathbf{x})$, where $\sigma=\sum_ia_i$.
To express this coefficient we apply the following effective version of the
Combinatorial Nullstellensatz \cite{Alon} observed independently by
Laso\'n \cite{Lason} and by Karasev and Petrov \cite{Karasev}.
A sketch of the proof is included for the sake of completeness. 

\begin{lemma}\label{interpol} 
Let $\mathbb{F}$ be an arbitrary field and 
$F\in \mathbb{F}[x_1, x_2, \dots, x_n]$ a polynomial of degree
$\deg(F)\leq d_1+d_2+\dots+d_n$. 
For arbitrary subsets $A_1, A_2, \dots, A_n$ of $\mathbb{F}$ 
with $|A_i|=d_i+1$, the coefficient of $\prod x_i^{d_i}$ in $F$ is
$$ \sum_{c_1\in A_1} \sum_{c_2\in A_2} \dots \sum_{c_n\in A_n} 
\frac{F(c_1, c_2, \dots, c_n)}{\phi_1'(c_1)\phi_2'(c_2)\dots \phi_n'(c_n)},$$
where $\phi_i(z)= \prod_{a\in A_i}(z-a)$.
\end{lemma}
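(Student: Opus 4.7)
The plan is to reduce the identity to a one-variable computation by linearity, and then invoke Lagrange interpolation. Since both sides of the asserted formula depend $\mathbb{F}$-linearly on $F$, it suffices to verify the identity when $F$ is a single monomial $x_1^{e_1}\cdots x_n^{e_n}$ with $e_1+\cdots+e_n\leq d_1+\cdots+d_n$. For such an $F$ the multiple sum factorises and equals $\prod_{i=1}^n S_i(e_i)$, where
$$S_i(e):=\sum_{c\in A_i}\frac{c^e}{\phi_i'(c)}.$$

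The heart of the matter is the single-variable identity $S_i(e)=\delta_{e,d_i}$, valid in the range $0\leq e\leq d_i$. This is a standard consequence of Lagrange interpolation: every polynomial $P(z)$ of degree at most $d_i=|A_i|-1$ admits the expansion
$$P(z)=\sum_{c\in A_i}P(c)\prod_{a\in A_i\setminus\{c\}}\frac{z-a}{c-a},$$
and the coefficient of $z^{d_i}$ on the right-hand side is precisely $\sum_{c\in A_i}P(c)/\phi_i'(c)$. Specialising to $P(z)=z^e$ with $0\leq e\leq d_i$, the quantity $S_i(e)$ equals the coefficient of $z^{d_i}$ in $z^e$, which is $1$ if $e=d_i$ and $0$ otherwise.

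It then remains to compare coefficients. The coefficient of $\prod_i x_i^{d_i}$ in the monomial $\prod_i x_i^{e_i}$ is $1$ precisely when $(e_1,\ldots,e_n)=(d_1,\ldots,d_n)$ and vanishes otherwise. On the product side, if every $e_i=d_i$ then each $S_i(d_i)=1$ and the product is $1$. Otherwise some $e_i\neq d_i$: if $e_i<d_i$ then $S_i(e_i)=0$ by the previous step, and if $e_i>d_i$ then the global degree bound $\sum e_k\leq\sum d_k$ forces some other index $j$ with $e_j<d_j$, so that $S_j(e_j)=0$ annihilates the product. The one point that genuinely uses the hypothesis $\deg(F)\leq d_1+\cdots+d_n$ is the disposal of monomials with an exponent exceeding some $d_i$; I expect this to be the only mild subtlety in an otherwise routine argument.
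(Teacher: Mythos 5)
Your proof is correct, and it takes a different route from the paper's. The paper does not decompose $F$ into monomials: it reduces $F$ successively modulo the univariate polynomials $\phi_i(x_i)$ (an operation which, thanks to the degree hypothesis, leaves the coefficient of $\prod x_i^{d_i}$ untouched), observes that the end result agrees with $F$ on the grid $A_1\times\dots\times A_n$ and has degree at most $d_i$ in each $x_i$, and then identifies it as the multivariate Lagrange interpolant, from which the coefficient is read off. You instead exploit linearity to reduce to a single monomial $\prod x_i^{e_i}$, factor the grid sum into the univariate quantities $S_i(e_i)$, and use the one-variable fact $S_i(e)=\delta_{e,d_i}$ for $0\le e\le d_i$ together with a pigeonhole step (some $e_j<d_j$ whenever some $e_i>d_i$) to dispose of all off-diagonal monomials. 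Your version is more elementary in that it only ever invokes univariate interpolation, and it isolates precisely where the hypothesis $\deg F\le\sum d_i$ is used, namely to guarantee that a monomial with some exponent exceeding $d_i$ must have another exponent falling short of $d_j$; the paper's version is slightly slicker to state, avoids expanding $F$, and produces as a byproduct the explicit reduced polynomial $F_n$, which is the form in which this effective Combinatorial Nullstellensatz is usually quoted. Both arguments are complete; the only point in yours worth flagging is that when $e_i>d_i$ the value $S_i(e_i)$ need not be $0$ or $1$, but as you note this is harmless because the vanishing factor $S_j(e_j)=0$ kills the product regardless.
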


\begin{proof}
Construct a sequence of polynomials 
$F_0:=F, F_1,\ldots,F_n\in \mathbb{F}[\mathbf{x}]$ recursively
as follows. For $i=1,\ldots,n$, let $F_i=F_i(\mathbf{x})$ denote the
remainder obtained after dividing $F_{i-1}(\mathbf{x})$ by $\phi_i(x_i)$ over the
ring $\mathbb{F}[x_1,\ldots,x_{i-1},x_{i+1},\dots,x_n]$. This process does not 
affect the coefficient of $\prod x_i^{d_i}$. The polynomial $F_n$ satisfies
$F_n(\mathbf{c})=F(\mathbf{c})$ for all $\mathbf{c}\in A_1\times \dots \times
A_n$ and its degree in $x_i$ is at most $d_i$ for every $i$. The unique 
polynomial with that property is expressed in the form
$$F_n(\mathbf{x})=\sum_{\mathbf{c}\in A_1\times\dots\times A_n}F(\mathbf{c})
\prod_{i=1}^n\prod_{\substack{\gamma\in A_i\\\gamma\ne c_i}}\frac{x_i-\gamma}{c_i-\gamma}$$
by the Lagrange interpolation formula, hence the result.
\end{proof}

The idea is to apply this lemma taking $\mathbb{F}=\mathbb{Q}(q)$ with a 
suitable choice of the sets $A_i$ such that $F(\mathbf{c})=0$ for all but
one element $\mathbf{c}\in A_1\times\dots\times A_n$. 
Put $A_i=\{1, q, \dots, q^{\sigma-a_i}\}$, then $|A_i|=\sigma-a_i+1$; 
and introduce
$\sigma_i=\sum_{j=1}^{i-1}{a_j}$. Thus, $\sigma_1=0$ and $\sigma_{n+1}=\sigma$.

\begin{claim}
For $\mathbf{c}\in A_1\times\dots\times A_n$ we have $F(\mathbf{c})=0$,
unless $c_i=q^{\sigma_i}$ for all $i$.
\end{claim}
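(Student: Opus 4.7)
The plan is to exploit the simple structure of $A_i = \{1,q,\dots,q^{\sigma-a_i}\}$: every coordinate $c_i$ has the form $q^{e_i}$ with $e_i \in \{0,1,\dots,\sigma-a_i\}$, and a factor of $F(\mathbf{c})$ vanishes exactly when the $q$-exponents on either side of a difference match. So I translate the non-vanishing requirement into inequalities on the integer vector $(e_1,\dots,e_n)$, and then show these inequalities pin down $e_i = \sigma_i$ uniquely.

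First, for each pair $i<j$ the factor $\prod_{t=0}^{a_i-1}(x_j - x_i q^t)$ vanishes at $\mathbf{c}$ iff $e_j - e_i \in \{0,1,\dots,a_i-1\}$, and the factor $\prod_{t=1}^{a_j}(x_i - x_j q^t)$ vanishes iff $e_i - e_j \in \{1,\dots,a_j\}$. Hence $F(\mathbf{c})\neq 0$ forces, for every $i<j$,
\[
e_j - e_i \;\geq\; a_i \qquad \text{or}\qquad e_i - e_j \;\geq\; a_j+1.
\]
In particular the $e_i$ are pairwise distinct, so there is a unique permutation $\pi$ with $e_{\pi(1)} < e_{\pi(2)} < \dots < e_{\pi(n)}$.

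Next, I examine two consecutive indices $k$, $k+1$ in the sorted order. Writing $i=\pi(k)$, $j=\pi(k+1)$, I have $e_i<e_j$, so the second alternative above fails for the pair $\{\min(i,j),\max(i,j)\}$; this gives $e_{\pi(k+1)} \geq e_{\pi(k)} + a_{\pi(k)}$, with an extra $+1$ if $\pi(k)>\pi(k+1)$. Telescoping and using $e_{\pi(1)}\geq 0$,
\[
e_{\pi(n)} \;\geq\; \sum_{k=1}^{n-1} a_{\pi(k)} \;=\; \sigma - a_{\pi(n)}.
\]
But $c_{\pi(n)}\in A_{\pi(n)}$ forces $e_{\pi(n)} \leq \sigma - a_{\pi(n)}$, so equality holds throughout the chain.

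The final step is extracting the conclusion from the equality case. Equality in the telescoped sum means $e_{\pi(1)}=0$, each step is tight, and no $+1$ ever occurred; the last condition says $\pi(k)<\pi(k+1)$ for all $k$, i.e.\ $\pi$ is the identity. Then $e_1=0$ and $e_{i+1}=e_i + a_i$, giving $e_i = \sigma_i$ as required. The main obstacle is the asymmetry between the two alternatives ($a_i$ versus $a_j+1$); this is exactly what forces $\pi$ to be the identity permutation once the bounds are saturated, so it is helpful rather than a real obstruction.
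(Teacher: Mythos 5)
Your proposal is correct and follows essentially the same route as the paper: translate non-vanishing into the dichotomy $e_j-e_i\ge a_i$ or $e_i-e_j\ge a_j+1$, sort the exponents by a permutation $\pi$, telescope the consecutive gaps against the bound $e_{\pi(n)}\le\sigma-a_{\pi(n)}$, and use the asymmetric $+1$ to force $\pi=\mathrm{id}$ in the equality case. (Only a wording slip: when $\pi(k)>\pi(k+1)$ it is the \emph{first} alternative for the pair $\{\min,\max\}$ that fails, the second then supplying the extra $+1$; your stated inequality is nonetheless the right one.)
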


\begin{proof}
Suppose that $F(\mathbf{c})\ne 0$ for the numbers $c_i=q^{\alpha_i}\in A_i$.
Here $\alpha_i$ is an integer satisfying $0\le \alpha_i\le \sigma-a_i$.
Then for each pair $j>i$, either $\alpha_j-\alpha_i\ge a_i$, or 
$\alpha_i-\alpha_j\ge a_j+1$. In other words, $\alpha_j-\alpha_i\ge a_i$
holds for every pair $j\ne i$, with strict inequality if $j<i$.
In particular, all of the $\alpha_i$ are distinct.
Consider the unique permutation $\pi$ satisfying 
$\alpha_{\pi(1)}< \alpha_{\pi(2)}<\dots< \alpha_{\pi(n)}$.
Adding up the inequalities $\alpha_{\pi(i+1)}-\alpha_{\pi(i)}\ge a_{\pi(i)}$
for $i=1,2\ldots,n-1$ we obtain 
$$\alpha_{\pi(n)}-\alpha_{\pi(1)}\ge 
\sum_{i=1}^{n-1}a_{\pi(i)}=\sigma-a_{\pi(n)}.$$
Given that $\alpha_{\pi(1)}\ge 0$ and 
$\alpha_{\pi(n)}\le \sigma-a_{\pi(n)}$, strict
inequality is excluded in all of these inequalities. It follows that
$\pi$ must be the identity permutation and
$\alpha_i=\alpha_{\pi(i)}=\sum_{j=1}^{i-1}a_{\pi(j)}=\sigma_i$
must hold for every $i=1,2,\dots,n$.
This proves the claim.
\end{proof}

This way finding the constant term of $f_q$ is reduced to the evaluation of
$$\frac{F(q^{\sigma_1}, q^{\sigma_2}, \dots, q^{\sigma_n})}
{\phi_1'(q^{\sigma_1})\phi_2'(q^{\sigma_2})\dots \phi_n'(q^{\sigma_n})},$$
where $\phi_i(z)=(z-1)(z-q)\dots(z-q^{\sigma-a_i})$. Here

\begin{align*}
\phi_i'(q^{\sigma_i})&=
\prod_{t=0}^{\sigma_i-1}{(q^{\sigma_i}-q^t)}\cdot
\prod_{t=\sigma_i+1}^{\sigma-a_i}{(q^{\sigma_i}-q^t)}\\ 
&=\prod_{t=0}^{\sigma_i-1}{q^t(q^{\sigma_i-t}-1)}\cdot
\prod_{t=1}^{\sigma-\sigma_{i+1}}{q^{\sigma_i}(1-q^t)}\\
&=(-1)^{\sigma_i}q^{\tau_i}
\left(q\right)_{\sigma_i}\left(q\right)_{\sigma-\sigma_{i+1}}
\end{align*}

\noindent
with $\tau_i=\binom{\sigma_i}{2}+\sigma_i(\sigma-\sigma_{i+1})$, whereas

\begin{align*}
{F(q^{\sigma_1}, q^{\sigma_2}, \dots, q^{\sigma_n})}&=
\prod_{1\leq i < j \leq n}\left(
\prod_{t=0}^{a_i-1}{q^{\sigma_i+t}(q^{\sigma_j-\sigma_i-t}-1)} \cdot
\prod_{t=1}^{a_j}{q^{\sigma_i}(1-q^{\sigma_j-\sigma_i+t})} 
\right)\\
&=(-1)^uq^v\prod_{1\leq i < j \leq n}\left(
\frac{\left(q\right)_{\sigma_j-\sigma_i}}
{\left(q\right)_{\sigma_j-\sigma_{i+1}}}
\cdot
\frac{\left(q\right)_{\sigma_{j+1}-\sigma_i}}
{\left(q\right)_{\sigma_j-\sigma_{i}}}\right)\\
&=(-1)^uq^v
\prod_{i=1}^n \frac{\left(q\right)_{\sigma_i}\left(q\right)_{\sigma-\sigma_i}}
{\left(q\right)_{\sigma_{i+1}-\sigma_i}}
\end{align*}

\noindent 
with $u=\sum_i(n-i)a_i$ and $v=\sum_i\left((n-i)a_i\sigma_i+
(n-i)\binom{a_i}{2}+\sigma_i(\sigma-\sigma_{i+1})\right)$.
\bigskip

In view of the simple identity $\sum_i(n-i)a_i=\sum_i\sigma_i$, we have
$u=\sum_i\sigma_i$, thus the powers of $-1$ cancel out. The same happens
with the powers of $q$ due to the following observation, which implies
$v=\sum_i\tau_i$.

\begin{claim}
$\sum_i{(n-i)\left(a_i\sigma_i+\binom{a_i}{2}\right)}= 
\sum_i\binom{\sigma_i}{2}.$
\end{claim}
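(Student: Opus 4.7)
The plan is to rewrite the summand on the left-hand side as a telescoping difference of binomial coefficients and then perform a discrete integration by parts (Abel summation) to reindex the resulting sum.

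First I would observe the elementary identity
\[
a_i\sigma_i+\binom{a_i}{2}=\binom{\sigma_i+a_i}{2}-\binom{\sigma_i}{2}=\binom{\sigma_{i+1}}{2}-\binom{\sigma_i}{2},
\]
which is just the expansion of $\binom{\sigma_i+a_i}{2}$ together with the recursion $\sigma_{i+1}=\sigma_i+a_i$. Substituting this identity into the left-hand side converts it to
\[
\sum_{i=1}^{n}(n-i)\binom{\sigma_{i+1}}{2}-\sum_{i=1}^{n}(n-i)\binom{\sigma_i}{2}.
\]

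Next I would shift the index in the first sum by setting $j=i+1$, so it becomes $\sum_{j=2}^{n+1}(n-j+1)\binom{\sigma_j}{2}$. The boundary term at $j=n+1$ vanishes because its coefficient $(n-j+1)$ is zero, and extending the sum down to $j=1$ is harmless since $\sigma_1=0$ forces $\binom{\sigma_1}{2}=0$. Thus the two sums combine to
\[
\sum_{i=1}^{n}\bigl((n-i+1)-(n-i)\bigr)\binom{\sigma_i}{2}=\sum_{i=1}^{n}\binom{\sigma_i}{2},
\]
which is exactly the right-hand side. There is no genuine obstacle here; the only point worth stating carefully is that both boundary contributions of the Abel summation happen to vanish, one because $\sigma_1=0$ and the other because the weight $(n-i)$ kills the $i=n$ term.
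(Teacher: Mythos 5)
Your proof is correct and rests on the same key fact as the paper's: the identity $a_i\sigma_i+\binom{a_i}{2}=\binom{\sigma_{i+1}}{2}-\binom{\sigma_i}{2}$, whose telescoped form $\sum_{i=1}^{n}\bigl(a_i\sigma_i+\binom{a_i}{2}\bigr)=\binom{\sigma_{n+1}}{2}$ is exactly the relation the paper uses to drive its induction on $n$. You simply unpack that induction into a direct Abel summation with the two boundary terms checked explicitly, which is an equally valid presentation of the same argument.
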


\begin{proof}
We proceed by a routine induction on $n$. 
When $n=0$, both expressions are 0, and
one readily checks the relation
$$\sum_{i=1}^n\left(a_i\sigma_i+\binom{a_i}{2}\right)
=\binom{\sigma_{n+1}}{2},$$
which completes the induction.
\end{proof}

Putting everything together we obtain that the constant term of $f_q$ is indeed

\begin{align*}
\frac{F(q^{\sigma_1}, q^{\sigma_2}, \dots, q^{\sigma_n})}
{\phi_1'(q^{\sigma_1})\phi_2'(q^{\sigma_2})\dots \phi_n'(q^{\sigma_n})}&=
\prod_{i=1}^n
\frac{\left(q\right)_{\sigma_i}\left(q\right)_{\sigma-\sigma_i}}
{\left(q\right)_{\sigma_i}\left(q\right)_{\sigma-\sigma_{i+1}}
\left(q\right)_{\sigma_{i+1}-\sigma_i}}\\
&=\frac{\left(q\right)_{\sigma}}
{\displaystyle{\prod_{i=1}^n\left(q\right)_{\sigma_{i+1}-\sigma_i}}}\\
&=\frac{\left(q\right)_{a_1+a_2+\dots +a_n}}
{\left(q\right)_{a_1}\left(q\right)_{a_2}\dots\left(q\right)_{a_n}}.
\end{align*}

\bibliographystyle{amsplain}

\end{document}